\newcounter{numb}
\theoremstyle{plain} %% This is the default
\newtheorem{theorem}{Theorem}[section]
\newtheorem{corollary}[theorem]{Corollary}
\newtheorem{proposition}[theorem]{Proposition}
\newtheorem{definition}[theorem]{Definition}
\newtheorem{remark}[theorem]{Remark}
\newtheorem{example}[theorem]{Example}
\numberwithin{equation}{section}
\newcommand{\bz}{\mathbf {Z}}
\newcommand{\x}{\mathcal{X}}
\newcommand{\w}{\wedge}
\newcommand{\p} {\partial}
\newcommand{\g} {\mathfrak {g}}
\newcommand{\kl}{\mathfrak {K}}
\newcommand{\G}{\mathfrak {G}}
\newcommand{\F}{\mathfrak {F}}
\begin{document}
\title{ LIE $n$-RACKS }
\author{Guy Roger Biyogmam}
%\footnote{The author gives special thanks to  \textbf{Dr. Jerry Lodder}.}}
\date{}
\maketitle{Department of Mathematics,\\Southwestern Oklahoma State University,\\100 Campus Drive,\\Weatherford, OK 73096, USA,\\\texttt{Email:}\textit{guy.biyogmam@swosu.edu}.}

\begin{abstract} 
 %Kinyon showed that the tangent space of a Lie Rack at the neutral element has a Leibniz algebra structure. This provided a promising lead towards solving the Coquecigrue  problem for Leibniz algebras.
 In this paper, we introduce the category of  Lie $n$-racks and generalize several results known on racks. In particular, we show that the tangent space of a Lie $n$-Rack at the neutral element has a Leibniz $n$-algebra structure. We also define a cohomology theory of $n$-racks.\\
\end{abstract}
\textbf{Mathematics Subject Classifications(2000):} 17AXX,  18E10, 18G35.\\
\textbf{Key Words}: Leibniz $n$-algebras, Filippov algebras, Lie $n$-racks, rack cohomology.

\section{Introduction and Generalities}

 Manifold with $n$-ary operations on their function algebra have been an important area of attraction for many years. It all started with Nambu \cite{N} introducing in  1973, an $n$-ary generalization of Hamiltonian Dynamics by the $n$-ary Poisson bracket, also known as the Nambu bracket. Since then, Nambu-Poisson structures have been seriously studied. The work of   Takhtajin \cite{T} in 1994 on the foundation of the theory of Nambu-Poisson manifolds, and recent applications of Nambu brackets to brane theory \cite{B}  are just a few to mention. In particular, Nambu algebras are infinite dimensional Filippov algebras. Also known in literature as $n$-Lie algebras (not to mistake with Lie $n$-algebras),  Filippov algebras  were first introduced in 1895 by Filippov \cite{F} and was generalized to the concept of Leibniz $n$-algebras by Casas, Loday and Pirashvili \cite{CLP}. Both concepts are important in Nambu Mechanics \cite{N}. Applications of Filippov algebras are found in String theory \cite{B} and in  Yang-Mills theory \cite{FU}.

One of the most important problems in Leibniz algebra theory is the coquecigrue problem (a generalization of Lie's  third theorem to Leibniz algebras) which consists of finding  a  generalization of groups whose tangent algebra structure corresponds to a Leibniz algebra. Loday dubbed these objects ``coquecigrues'' \cite{LD} as no properties were foreseen on them. While attempting  to solve this problem, Kinyon \cite{K} showed that  the tangent space at the neutral element of a Lie rack has a Leibniz algebra structure. In his doctoral thesis, Covez \cite{C} provided a local answer to the coquecigrue  problem  by showing that every Leibniz algebra becomes integrated into a local augmented Lie rack. Thus Lie racks seem to be the best candidates for ``coquecigrues''. 

   Meanwhile, Grabowski and Marmo  \cite{GM} provided in the same order of idea an important connection between Filippov algebras and Nambu- Lie Groups. All these ideas suggest a new mathematical structure by extending the binary operation of Lie racks to an $n$-ary operation. This yields the introduction of Lie $n$-racks and generalizes Lie racks from the case $n=2.$ It turns out that one can extend Kinyon's result to Leibniz algebras via Lie $n$-racks. If it is reasonable to talk about the coquecigrue problem for Leibniz $n$-algebras, this work certainly opens a lead towards a solution.

In \cite{FRS}, Fenn, Rourke and sanderson introduced a cohomology theory for racks which was modified in \cite{CJKLS} by Carter, Jelsovsky, Kamada, Landford and Saito to obtain quandle cohomology, and several results have been recently established. We use these cohomology theories in section 4 to define  cohomology theories on $n$-racks and $n$-quandles. 

let us recall a few definitions. Given a field $\kl$ of characteristic different to 2, a Leibniz $n$-algebra \cite{CLP} is defined as a $\kl$ -vector space $\g$ equipped with an $n$-linear operation\\ $[-,\ldots,-]: \g^{\otimes n}\longrightarrow \g$ satisfying the identity 

 $$[x_1,\ldots, x_{n-1}, [y_1,y_2,\ldots, y_n]]=\sum_{i=1}^n[y_1,\ldots,y_{i-1},[x_1,\ldots,x_{n-1},y_i], y_{i+1},\ldots, y_n]~~~(1.1)$$
Notice that in the case where the $n-ary$ operation $[-,\ldots,-]$ is antisymetric in each pair of variables, i.e., $$[x_1,x_2,\ldots,x_i,\ldots,x_j,\ldots,x_n]=-[x_1,x_2,\ldots,x_j,\ldots,x_i,\ldots,x_n]$$ or equivalently $[x_1,x_2,\ldots,x,\ldots,x,\ldots,x_n]=0~\mbox{for all $x \in$ G},$ the Leibniz $n$-algebra becomes a Filippov algebra (more precisely a $n$-Filippov algebra). Also, a Leibniz $2$-algebra is exactly a Leibniz algebra \cite[p.326]{L} and become a Lie algebra if the binary operation $[~,~]$ is skew symmetric.

If $\g$ is a vector space endowed with an $n$-linear operation $\sigma:\g\times\g\times\ldots\times\g\longrightarrow\g$, then a map $D:\g\longrightarrow\g$ is called a derivation with respect to $\sigma$ if 
$$D(\sigma(x_1,\ldots,x_n))=\sum_{i=1}^n\sigma(x_1,\ldots,D(x_i),\ldots,x_n).$$

 A Nambu-Lie group $(G,P)$ is a Lie group $ G$  with a rank $n$  $G$-multiplicative Nambu tensor Nambu tensor $P$ i.e.,$$P(g_1,g_2)=L_{g_1^*}P(g_2)+R_{g_2^*}P(g_1),$$ where $L_{g_1^*}$ and $R_{g_2^*}$ denote respectively left and right  translations in $G.$ More on Nambu manifolds can be found in \cite{V}

The following proposition provides an important connection between Nambu-Lie groups and Filippov algebras. It is mainly stated in this paper because it shows an analogy between the connection of filippov algebras with Nambu-Lie groups, and the connection of Leibniz $n$-algebras with Lie $n$-racks provided in section 4.
\begin{proposition} ( \cite {GM})
Let $(G,P)$ be a Nambu-Lie group and let $\delta_p:\g\longrightarrow \w^n(\g)$ denote the intrinsic derivative $\delta_P(X)=L_X(P)(e)$ where $L_X$ denotes the Lie derivative and $e$ the identity element of  $G.$ Then the map $\delta^*_P:\w^n(\g^*)\longrightarrow\g^*$ defines a filippov bracket on $\g^*.$ 
\end{proposition}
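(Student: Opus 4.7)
The plan is to view the intrinsic derivative $\delta_P$ as the linearization of the Nambu tensor $P$ at $e$, to dualize it, and then to extract the Filippov identity from the fundamental identity satisfied by $P$ on $G$ by differentiating twice at $e$. Throughout the argument the multiplicativity condition $P(g_1 g_2)=L_{g_1 *}P(g_2)+R_{g_2 *}P(g_1)$ serves as an infinitesimal cocycle condition on $\delta_P$.

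First I would check that $\delta_P$ is well defined. Setting $g_1=g_2=e$ in the multiplicativity condition forces $P(e)=2P(e)$, so $P(e)=0$, and therefore $(L_X P)(e)$ lies in $\wedge^n T_e G=\wedge^n\g$ for every $X\in\g$. Dualizing the resulting linear map $\delta_P:\g\to\wedge^n\g$ produces $\delta_P^*:\wedge^n(\g^*)\to\g^*$, and one sets $[\xi_1,\ldots,\xi_n]:=\delta_P^*(\xi_1\wedge\cdots\wedge\xi_n)$. Total antisymmetry of this $n$-ary operation in every pair of arguments is automatic because the input is a pure wedge of $n$ covectors, so only the Filippov form of (1.1) remains to be proved.

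The heart of the argument is to derive this identity. I would first apply the Lie derivative $L_X$ to the Nambu fundamental identity for $P$ (the $n$-ary analogue of the Poisson--Jacobi identity) and evaluate at $e$, transferring it to an identity on $\wedge^n\g$. Next I would extract from the multiplicativity condition, by differentiating along two independent one-parameter subgroups $\exp(tX)$ and $\exp(sY)$ and collecting the mixed term, the cocycle relation showing that $\delta_P$ intertwines the Lie bracket of $\g$ with the induced adjoint action on $\wedge^n\g$. Combining these two pieces of information and dualizing produces exactly the antisymmetric Leibniz $n$-algebra identity (1.1) for $[\xi_1,\ldots,\xi_n]$ on $\g^*$, which by the remark immediately following (1.1) is the defining Filippov axiom.

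The main obstacle I expect is essentially combinatorial and sign-theoretic: matching the $n$ summands produced by dualizing the twice-differentiated fundamental identity with the $n$ terms on the right-hand side of (1.1), and checking that the signs coming from permutations of wedge factors combine with the antisymmetrization coming from the cocycle relation so that no spurious terms survive. No genuinely new ideas beyond careful differentiation at $e$ seem to be required, but the interplay of multiplicativity at second order with the Nambu identity at first order demands careful bookkeeping.
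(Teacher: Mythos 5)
First, a point of calibration: the paper does not prove this proposition at all. It is quoted from Grabowski and Marmo \cite{GM} and, as the surrounding text says, is ``mainly stated'' to motivate the analogy with Theorem 4.4 and Corollary 4.6. So there is no in-paper argument to compare your attempt against; it can only be judged on its own terms.

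On those terms, what you have is a correct frame with the load-bearing step left out. The preliminary observations are right and worth keeping: setting $g_1=g_2=e$ in the multiplicativity condition gives $P(e)=0$, so $\delta_P(X)=(L_XP)(e)$ depends only on $X(e)$ and defines a linear map $\g\to\wedge^n\g$, and total antisymmetry of $[\xi_1,\ldots,\xi_n]:=\delta_P^*(\xi_1\wedge\cdots\wedge\xi_n)$ is automatic. But the entire content of the proposition is the fundamental identity (1.1), and for that you only describe intentions (``apply $L_X$ to the fundamental identity and evaluate at $e$,'' ``extract the cocycle relation,'' ``combine and dualize'') without exhibiting the computation. In particular, you never say how covectors $\xi\in\g^*$ are to be realized as functions on $G$ near $e$ so that the Nambu bracket of functions can be compared with $\delta_P^*$; that is precisely where $P(e)=0$ is used a second time to discard higher-order terms, and it is where the ``bookkeeping'' you defer actually lives. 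You also slightly misassign the roles of the two hypotheses: the Filippov identity for the linearized bracket is a consequence of the fundamental identity of $P$ together with $P(e)=0$ (this is the $n$-ary version of the fact that the linearization of a Poisson structure at a zero is a Lie--Poisson structure), whereas the multiplicativity beyond forcing $P(e)=0$ is what makes $\delta_P$ a $1$-cocycle compatible with the Lie bracket of $\g$ --- a bialgebra-type statement that is not needed for (1.1) itself. As submitted, the proposal identifies the standard linearization route but does not yet constitute a proof; the reader who wants one should be sent to \cite{GM}.
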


A lie Rack $(R,\circ,1)$ is a smooth manifold R with a binary operation $\circ$ and a specific element $1\in R$ such that the following conditions are satisfied:
\begin{itemize}
\item $x\circ(y\circ z)=(x\circ y)\circ (x\circ z)$
\item for each $x,y\in R$, there exits a unique $a\in R$ such that $x\circ a= y$
\item $1\circ x=x$  and $x\circ 1=1$ for all $x\in R$ 
\item the operation $\circ: R\times R\longrightarrow  R$ is a smooth mapping. 
\end{itemize}

\section {$n$-racks}

\begin{definition}: A left  $n$-rack\footnote{2-racks coincide with Racks. They   were introduced  in 1959 by G. Wraith and J. Conway\cite{CW}, and have been used in various topics of mathematics. In particular, they are mainly used in topology to provide invariants for knots \cite{DJ}.} (right $n$-racks are defined similarly) $(R,[-,\ldots,-]_{R})$ is a set $R$ endowed with an $n$-ary operation $[-,\ldots,-]_{R}: R\times R\times\ldots\times R\longrightarrow R$  such that 

\begin{enumerate}
\item $[x_1,\ldots,x_{n-1},[y_1,\ldots,y_{n-1}]_{R}]_{R}=[[x_1,\ldots,x_{n-1},y_1]_{R},\ldots,[x_1,\ldots,x_{n-1},y_n]_{R}]_{R}~~~(2.1)$\\(This is the left distributive property of $n$-racks) 
\item For   $a_1,\ldots,a_{n-1},b\in R,$ there exists  a unique $x\in R$ such that $[a_1,\ldots,a_{n-1},x]_{R}=b.$
\end{enumerate}
If in addition there is a distinguish element  $1\in R$, such that $$\textit{3.}~[1,\ldots,1,y]_{R}=y~~\mbox{and}~~[x_1\ldots,x_{n-1},1]_{R}=1~~\mbox{ for all}~~ x_1,\ldots,x_{n-1}\in R,~~~~~~~~~~$$ then $(R,[\ldots]_R,1)$ is said to be a pointed $n$-rack.

A $n$-rack is a weak $n$-quandle if it further satisfies   $$[x,x,\ldots,x,x]_R=x~\mbox{for all}~ x\in R.$$ 

A $n$-rack is a $n$-quandle if it further satisfies   $$[x_1,x_2,\ldots,x_{n-1},y]_R=y~\mbox{if}~ x_i=y~~\mbox{for some}~ i\in \{1,2,\ldots,n-1\}.$$ 

A $n$-quandle (resp weak $n$-quandle) is a $n$-kei (resp weak $n$-kei) if it further satisfies $$[x_1,\ldots,x_{n-1},[x_1,\ldots,x_{n-1},y]]=y~~\mbox{for all}~x_1,\ldots,x_{n-1},y\in R.$$  
\end{definition}
For $n=2,$ one recovers racks, quandles\cite{DJ} and keis \cite{TA}. Note also that $n$-quandles are also weak $n$-quandles, but the converse is not true for $n>2;$ See example 2.3.

\begin{definition}
Let $R,~R'$ be $n$-racks. A function $\alpha: R\longrightarrow R'$ is said to be a homomorphism of $n$-racks if $$\alpha([x_1,\ldots,x_n]_R)=[\alpha(x_1),\alpha(x_2),\ldots,\alpha(x_n)]_{R'}~~\mbox{for all}~x_1,x_2,\ldots,x_n\in R.$$

\end{definition}
We may thus form the category $_npRACK $ of pointed  $n$-racks and pointed $n$-rack homomorphisms.

\begin{example}
A $\bz_4$-module $M$  endowed with the operation $[-,\ldots,-]_M$ defined by   $$[x_1,\ldots,x_n]_M=2x_1+2x_2+\ldots +2 x_{n-1}+x_n $$ is a $n$-rack that  is a weak $n$-kei  if $n$ is odd.
Indeed $$[[x_1,\ldots,x_{n-1},y_1]_M,\ldots[x_1\ldots,x_{n-1},y_n]_M,]_M=~~~~~~~~~~~~~~~~~~~~~~~~~~~~~~~~~~~~~~~~~~$$
$$~~~~~~~~=(\sum_{i=1}^{n-1}2(2x_1+2x_2+\ldots +2 x_{n-1}+y_i))+(2x_1+2x_2+\ldots +2 x_{n-1}+y_n)$$ $$=(\sum_{i=1}^{n-1}2x_i+2y_i)+y_n~~~~~~~~~~~~~~~~~~~~~~~~~~~~~~~~~~~~~~~~~~~~~~~~~~~~~$$  $$=[x_1,\ldots,x_{n-1},[y_1,\ldots,y_n]_M]_M.~~~~~~~~~~~~~~~~~~~~~~~~~~~~~~~~~~~~~~~~~$$ Therefore (2.1) is satisfied.
One easily checks the other axioms.
\end{example}

\begin{example}
Let  $\Gamma:=\bz[t^{\pm 1}, s]/(s^2+ts-s).$ Any $\Gamma$-module $M$  endowed with the operation $[-,\ldots,-]_M$ defined by   $$[x_1,\ldots,x_n]_M=sx_1+sx_2+\ldots +s x_{n-1}+tx_n $$ is a  $n$-rack that generalizes the Alexander quandle when $s=1-t.$
Indeed $$[[x_1,\ldots,x_{n-1},y_1]_M,\ldots[x_1\ldots,x_{n-1},y_n]_M,]_M=~~~~~~~~~~~~~~~~~~~~~~~~~~~~~~~~~~~~~~~~~~$$
$$~~~~~~~~=(\sum_{i=1}^{n-1}s(sx_1+sx_2+\ldots +s x_{n-1}+ty_i))+t(sx_1+sx_2+\ldots +s x_{n-1}+ty_n)$$ $$=(s^2+st)(\sum_{i=1}^{n-1}x_i)+ts(\sum_{i=1}^{n-1} y_i)+t^2y_n~~~~~~~~~~~~~~~~~~~~~~~~~~~~~~~~~~~~$$  $$=[x_1,\ldots,x_{n-1},[y_1,\ldots,y_n]_M]_M ~~ \mbox{since}~~s^2+st=s.~~~~~~~~~~~~~~~~~~~$$ Therefore (2.1) is satisfied.
One easily checks the axiom 2. 
\end{example}
Note that for $t=1$ and $s=2,$ this coincides with example 2.3.
\begin{example}

A group G endowed with the operation $[-,\ldots,-]_G$ defined by   $$[x_1,\ldots,x_n]_G=x_1x_2\ldots x_{n-1}x_nx^{-1}_{n-1}x^{-1}_{n-2}...x_1^{-1},$$ is a pointed weak $n$-quandle (pointed by $1\in G$). Indeed,
$$[x_1,\ldots,x_{n-1},[y_1,\ldots,y_n]_G]_G=x_1x_2\ldots x_{n-1}[y_1,\ldots,y_n]_G x^{-1}_{n-1}x^{-1}_{n-2}...x_1^{-1}~~~~~~~~~~~~~~~~~$$ $$~~~~~~~~~~~~~~~~~~~~~=x_1x_2\ldots x_{n-1}y_1y_2\ldots y_{n-1}y_ny^{-1}_{n-1}y^{-1}_{n-2}\ldots y_1^{-1}x^{-1}_{n-1}x^{-1}_{n-2}...x_1^{-1}.$$
 on the other hand, 

$$[[x_1,\ldots,x_{n-1},y_1]_G,\ldots[x_1\ldots,x_{n-1},y_n]_G,]_G=~~~~~~~~~~~~~~~~~~~~~~~~~~~~~~~~~~~~~$$
$$~~~~~~~~~~~~~~~~~=[x_1x_2\ldots x_{n-1}y_1x^{-1}_{n-1}x^{-1}_{n-2}...x_1^{-1},\ldots,x_1x_2\ldots x_{n-1}y_nx^{-1}_{n-1}x^{-1}_{n-2}...x_1^{-1}]_G$$$$~~~~~~=x_1x_2\ldots x_{n-1}y_1y_2\ldots y_{n-1}y_ny^{-1}_{n-1}y^{-1}_{n-2}\ldots y_1^{-1}x^{-1}_{n-1}x^{-1}_{n-2}...x_1^{-1}$$ by cancellation. Therefore (2.1) is satisfied.
One easily checks the other axioms.

\end{example}

This determines a functor $\F: GROUP\longrightarrow~ _npRACK$ from the category of groups to the category of pointed $n$-racks. The functor $\F$ is faithful and  has a left adjoint $\F'$ defined as follows: Given a pointed $n$-rack $R,$  one constructs a Group $$G_R=<R>/I$$ where $<R>$ stands for the   free group on the elements of $R$ and $I$ is the normal subgroup generated by the set $$\{(x_1^{-1}x_2^{-1}\ldots x^{-1}_{n-1}x_n^{-1}x_{n-1}x_{n-2}\ldots x_1)([x_1,\ldots,x_n]_R)~\mbox{with}~x_i\in R, i=1,2,\ldots, n\}.$$ 
%modulo the relations  $$[x_1,\ldots,x_n]_R=x_1x_2\ldots x_{n-1}x_nx^{-1}_{n-1}x^{-1}_{n-2}...x_1^{-1}.$$ 

That $\F'$ is left adjoint to $\F$ is a consequence of the following proposition which extends to $n$-racks a well-known result on the category of racks.

\begin{proposition}
Let $G$ be  a group and let $R$ be a $n$-rack. For a morphism of $n$-racks $\alpha: R\longrightarrow \F(G),$ there is a unique morphism of groups $\alpha_*:\F'(R)\longrightarrow G$ such that the following diagram commute.

\[ 
\begin{diagram} 
  \node{}
  \node{\F'(R)} \arrow{e,t}{\alpha_*} 
  \node{G} 
  \node{}\\ 
  \node{} 
  \node{R} \arrow{e,t}{\alpha} \arrow{n,r}{}
  \node{\F(G)} \arrow{n,r}{id}
  \node{}\\
\end{diagram}
\]  
\end{proposition}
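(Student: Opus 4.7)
The plan is the standard adjunction argument via generators and relations. By the universal property of the free group $\langle R\rangle$, the set map $\alpha\colon R\to G$ underlying the given $n$-rack morphism extends uniquely to a group homomorphism $\tilde{\alpha}\colon\langle R\rangle\to G$ with $\tilde{\alpha}(x)=\alpha(x)$ for each $x\in R$.

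Next, I would show that $\tilde{\alpha}$ factors through the quotient $\F'(R)=\langle R\rangle/I$. Since $I$ is a normal subgroup of $\langle R\rangle$ and $\tilde{\alpha}$ is a group homomorphism, it suffices to check that $\tilde{\alpha}$ sends each of the listed normal generators of $I$ to $1\in G$. Applying $\tilde{\alpha}$ to a generator $(x_1^{-1}\cdots x_{n-1}^{-1}x_n^{-1}x_{n-1}\cdots x_1)\cdot[x_1,\ldots,x_n]_R$ produces the word $\alpha(x_1)^{-1}\cdots\alpha(x_{n-1})^{-1}\alpha(x_n)^{-1}\alpha(x_{n-1})\cdots\alpha(x_1)\cdot\alpha([x_1,\ldots,x_n]_R)$ in $G$. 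Invoking the rack-morphism hypothesis $\alpha([x_1,\ldots,x_n]_R)=[\alpha(x_1),\ldots,\alpha(x_n)]_G$ and expanding using the conjugation formula defining the operation on $\F(G)$ lets the resulting word telescope by successive cancellations to the identity of $G$. Hence $\tilde{\alpha}$ descends to a group homomorphism $\alpha_*\colon\F'(R)\to G$.

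Commutativity of the triangle is then immediate: the left vertical arrow sends $x\in R$ to its class $\overline{x}$ in $G_R$, and by construction $\alpha_*(\overline{x})=\tilde{\alpha}(x)=\alpha(x)$. For uniqueness, any group homomorphism $\beta\colon\F'(R)\to G$ making the diagram commute must satisfy $\beta(\overline{x})=\alpha(x)$ for every $x\in R$; since the classes $\{\overline{x}\}_{x\in R}$ generate $\F'(R)$ as a group, $\beta$ is forced to equal $\alpha_*$.

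The only step with genuine computational content is verifying that the normal generators of $I$ lie in $\ker\tilde{\alpha}$; everything else is formal and parallels the classical $n=2$ proof for ordinary racks. I would carry out that word-calculation by setting $g_i:=\alpha(x_i)$ and tracking the telescoping cancellation carefully, since that is the one place where a sign or ordering slip in the relator (e.g.\ confusing $g_n$ with $g_n^{-1}$ in the middle of the word) could derail an otherwise routine argument.
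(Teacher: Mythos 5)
Your proposal is correct and follows essentially the same route as the paper's own proof: extend $\alpha$ to the free group $\langle R\rangle$ by the universal property, check that the normal generators of $I$ are sent to $1\in G$, and descend by the universal property of the quotient; the paper merely asserts the relator computation where you spell out the telescoping, and your commutativity and uniqueness remarks (via the generating set $\{\overline{x}\}_{x\in R}$) are routine additions the paper leaves implicit. Your stated worry about ``a sign or ordering slip in the relator'' is in fact well-founded: with the relator prefix exactly as printed, $x_1^{-1}x_2^{-1}\cdots x_{n-1}^{-1}x_n^{-1}x_{n-1}\cdots x_1$, its image $g_1^{-1}\cdots g_{n-1}^{-1}g_n^{-1}g_{n-1}\cdots g_1$ does \emph{not} cancel against $[g_1,\ldots,g_n]_G=g_1\cdots g_{n-1}g_ng_{n-1}^{-1}\cdots g_1^{-1}$ (already for $n=2$ one gets $g_1^{-1}g_2^{-1}g_1^2g_2g_1^{-1}$, trivial only when $g_1^2$ commutes with $g_2$), so the telescoping you invoke requires reading the prefix as $([x_1,\ldots,x_n]_G)^{-1}=x_1x_2\cdots x_{n-1}x_n^{-1}x_{n-1}^{-1}\cdots x_1^{-1}$, which is surely what the paper intends.
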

\begin{proof}
By the universal property of free groups, there is a unique morphism of groups $\beta:<R>\longrightarrow G$ such that $\alpha=\beta|_{R}$ . In particular,  for all $x_i\in R, i=1,2\ldots, n$ $$\beta((x_1^{-1}x_2^{-1}\ldots x^{-1}_{n-1}x_n^{-1}x_{n-1}x_{n-2}\ldots x_1)([x_1,\ldots,x_n]_R))=~~~~~~~~~~~~~~~~~~~~~~~~~~~~~~~~~~~~~~~~~~~~~$$$$~~~~~~~~~~~~~~~~~~~~~~~=\alpha((x_1^{-1}x_2^{-1}\ldots x^{-1}_{n-1}x_n^{-1}x_{n-1}x_{n-2}\ldots x_1)([x_1,\ldots,x_n]_R))=1.$$The result follows by the universal property of quotient groups.

\end{proof}

\begin{example}
Any rack $(R,\circ,1)$ is also a $n$-rack under the  $n$-ary operation defined by 
$$[x_1,x_2,\ldots,x_n]_R=x_1\circ(x_2\circ(\ldots(x_{n-1}\circ x_n)\ldots)).$$
This process determines a functor $\G: pRACK\longrightarrow\  _npRACK$, which has as left adjoint, the functor $\G': ~ _npRACK\longrightarrow pRACK$ defined as follows: Given a pointed $n$-rack  $(R,[-\ldots-],1),$ then $R^{\times (n-1)}$ endowed with the binary operation $$(x_1, x_2,\ldots, x_{n-1})\circ(y_1,y_2,\ldots y_{n-1})=([x_1,\ldots,x_{n-1},y_1]_R,\ldots,[x_1,\ldots,x_{n-1},y_{n-1}]_R)~~~~~~(2.2)$$ is a rack pointed at $(1,1,\ldots,1)$. Let us observe that if $R$ is a $n$-quandle, then $R^{\times (n-1)}$ is a quandle.
\end{example}

\begin{definition} 
Let $R$ be a pointed $n$-rack and let $S_R=\{f:R\longrightarrow R,~f \mbox{is a bijection}\}.$ \\Then
define $\phi: R\times R\times\ldots \times R\longrightarrow~ _nAut(R)$ by $$\phi(x_1,\ldots,x_{n-1})(y)=[x_1\ldots,x_{n-1},y]_{R}~~\mbox{ for all}~~ y\in R$$ where
 $$_nAut(R)=\{\xi\in S_R~/~ \xi([x_1,\ldots,x_n]_{R})=[\xi(x_1),\ldots,\xi(x_{n})]_{R}\}.$$
That $\phi$ is well-defined is a direct consequence of the axiom 2 of definition 2.1.

\end{definition}

\begin{proposition}

Let $(R,[-,\ldots,-]_R,1)$ is a $n$-rack, then for all $x_1,\ldots x_{n-1}\in R,$ $\phi(x_1,\ldots,x_{n-1})$ operates on $R$ by $n$-rack automorphism, i.e. $\phi(x_1,\ldots,x_{n-1})\in~ _nAut(R).$ 
\end{proposition}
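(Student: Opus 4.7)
The plan is to verify the two requirements for membership in $_nAut(R)$ separately: first that $\phi(x_1,\ldots,x_{n-1})$ is a bijection of $R$, and second that it preserves the $n$-ary operation $[-,\ldots,-]_R$.

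For bijectivity, I would invoke axiom 2 of Definition 2.1 directly. Given any $b \in R$, the existence part of axiom 2 (applied with $a_i = x_i$) produces some $y \in R$ with $[x_1,\ldots,x_{n-1},y]_R = b$, i.e.\ $\phi(x_1,\ldots,x_{n-1})(y) = b$, giving surjectivity. The uniqueness part of the same axiom says that $y$ is the only such element, giving injectivity. Hence $\phi(x_1,\ldots,x_{n-1}) \in S_R$.

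For the homomorphism property, I would write out the two sides of
$$\phi(x_1,\ldots,x_{n-1})([y_1,\ldots,y_n]_R) = [\phi(x_1,\ldots,x_{n-1})(y_1),\ldots,\phi(x_1,\ldots,x_{n-1})(y_n)]_R$$
using only the definition of $\phi$. The left-hand side unfolds to $[x_1,\ldots,x_{n-1},[y_1,\ldots,y_n]_R]_R$, while the right-hand side becomes $[[x_1,\ldots,x_{n-1},y_1]_R,\ldots,[x_1,\ldots,x_{n-1},y_n]_R]_R$. These two expressions are equal by the left distributivity axiom (2.1), which is exactly this equality.

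Combining the two steps, $\phi(x_1,\ldots,x_{n-1})$ is a bijection satisfying the defining condition of $_nAut(R)$, so $\phi(x_1,\ldots,x_{n-1}) \in {}_nAut(R)$. There is no serious obstacle here; the statement is essentially a repackaging of the two axioms, and the only thing to be careful about is matching the slots of $\phi$ to the correct arguments in axiom (2.1) (namely, the element being acted on occupies the last position).
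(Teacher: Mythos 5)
Your proposal is correct and follows essentially the same route as the paper: the homomorphism property is exactly the left distributivity axiom (2.1) rewritten through the definition of $\phi$. The only difference is that you also spell out the bijectivity argument from axiom 2, which the paper disposes of in the remark following Definition 2.8 rather than in the proof of the proposition itself.
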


\begin{proof}: $$\phi(x_1,\ldots,x_{n-1})([y_1,
\ldots,y_n]_R)=[x_1,\ldots,x_{n-1},[y_1,\ldots,y_n]_R]_R~~~~~~~~~~~~~~~~~~~~~~~~~~~$$$$~~~~~~~~~~~~
~~~~~~~~~~~~~~~~~~~~~~~~~~~~~~~~~~~~~~~~~~~~~~~~~~~=[[x_1,\ldots,x_{n-1},y_1]_R,\ldots,[x_1,\ldots,x_{n-1},y_n]_R,]_R~~\mbox{by}~ (2.1)$$$$~~~~~~~~~~~~~~~~~~~~~~~~~~~~~~~~~~=[\phi(x_1,\ldots,x_{n-1})(y_1),\ldots,\phi(x_1,\ldots,x_{n-1})(y_n)]_R$$
\end{proof}

\section{A (co)homology theory on $n$-Racks}

Recall that for a rack $(X,\circ),$ one defines \cite{CJKLS} the rack homology $H_*^R(X)$ of $X$ as the homology of the chain complex $\{C^R_k(X), \p_k\}$ where $C^R_k(X)$ is the free abelian group generated by $k$-uples $(x_1,x_2,\ldots,x_k)$ of elements of $X$ and the boundary maps $\p_k: C_k^R(X)\longrightarrow C^R_{k-1}(X)$ are defined by $$\p_k(x_1,x_2,\ldots,x_k)=\sum_{i=2}^k(-1)^i((x_1,x_2,\ldots,x_{i-1},x_{i+1},\ldots,x_k)$$$$~~~~~~~~~~~~~~~-(x_1\circ x_i,x_2\circ x_i,\ldots,x_{i-1}\circ  x_i,x_{i+1},\ldots,x_k))$$ for $k\geq 2$ and $\p_k=0$ for $k\leq 1$. If $X$ is a quandle, the subgroups $C_k^D(X)$ of $C^R_k(X)$ generated by $k$-tuples $(x_1,x_2,\ldots,x_k)$ with $x_i=x_{i+1}$ for some $i, ~1\leq i < k$ form a subcomplex $C_*^D(X)$ of $C_*^R(X)$ whose homology $H_*^D(X)$ is called the degeneration  homology of $X.$ The homology $H_*^Q(X)$ of the quotient complex $\{C_k^Q(X)=C_k^R(X)/C_k^D(X),\p_k\}$ is called the quandle homology of $X.$

Now let $\x$ be  a $n$-rack. We showed in example 2.7 that $\x^{\times(n-1)}$ endowed with the binary operation defined by (2.2) is a rack. $\x^{\times(n-1)}$ is a quandle if $\x$ is a $n$-quandle.
\begin{definition}
We define the chain complexes $_nC_*^R(\x):=C_*^R(\x^{\times(n-1)})$ if  $\x$ is a $n$-rack, $_nC_*^D(\x):=C_*^D(\x^{\times(n-1)})~\mbox{ and}~~  _nC_*^Q(\x):=C_*^Q(\x^{\times(n-1)})~~\mbox{if } \x ~\mbox{is a }~n-\mbox{quandle}.$ 
\end{definition}
\begin{definition}

Let $\x$ be a $n$-rack. The $k$th $n$-rack homology group of $\x$ with trivial coefficient is defined by $$H_k^R(\x)=H_k(_nC_*^R(\x)).$$
\end{definition}
\begin {definition}
Let $\x$ be a $n$-quandle.
\begin{enumerate} 
\item The $k$th $n$-degeneration homology group of $\x$ with trivial coefficient is defined by $$H_k^D(\x)=H_k(_nC_*^D(\x)).$$
\item The $k$th $n$-quandle homology group of $\x$ with trivial coefficient is defined by $$H_k^Q(\x)=H_k(_nC_*^Q(\x)).$$
\end{enumerate}
\end{definition}
\begin{definition}
Let $A$ be a abelian group, we define the chain complexes $$_nC_*^W(\x;A)= ~_nC_*^W(\x)\otimes A, ~~\p=\p\otimes id  ~~\mbox{with}~~ W=D,R,Q.$$

\begin{enumerate}
\item The $k$th $n$-rack homology group of $\x$ with coefficient in $A$ is defined by $$H_k^R(\x;A)=H_k(_nC_*^R(\x;A)).$$

\item The $k$th $n$-degenerate homology group of $\x$ with coefficient in $A$ is defined by $$H_k^D(\x;A)=H_k(_nC_*^D(\x;A)).$$
\item The $k$th $n$-quandle homology group of $\x$ with coefficient in $A$ is defined by $$H_k^Q(\x;A)=H_k( _nC_*^Q(\x;A)).$$
\end{enumerate}

\end{definition}

One defines the cohomology theory of $n$-racks and $n$-quandles by duality. Note that for $n=2,$ one recovers the homology and cohomology theories defined by Carter, Jelsovsky, Kamada, Landford and Saito \cite{CJKLS}.

\begin{remark}
Since $\x^{\times(n-1)}$ carries most of the  properties of $\x,$ several results established on racks are valid on $n$-racks. For instance; if $\x$ is finite, then $\x^{\times(n-1)}$ is also finite. Cohomology of finite racks were studied by  Etingof and  Gra\~na in \cite{EG}
\end{remark}

\section{From Lie $n$-racks to Leibniz $n$-algebras}

In this section we define the notion of Lie $n$-racks and provide a connection with Leibniz $n$-algebras.  Throughout the section, $T_1$ denotes the tangent functor.

\begin{definition}

A Lie $n$-rack   $(R,[-,\ldots,-]_{R},1)$ is a smooth manifold  $R$ with the structure of a pointed $n$-rack such that the  $n$-ary operation $[-,\ldots,-]_{R}: R\times R\times\ldots \times R\longrightarrow R$  is a smooth mapping. For $n=2,$ one recovers Lie racks \cite{A}.

\end{definition}

\begin{example}

Let   $H$ be a Lie group. Then H endowed with the operation    $$[x_1,\ldots,x_n]_G=x_1x_2\ldots x_{n-1}x_nx^{-1}_{n-1}x^{-1}_{n-2}...x_1^{-1},$$ is a Lie $n$-rack.

\end{example}

\begin{example}

Let $(H, \{-\ldots-\})$ be a  group endowed with an antisymmetric $n$-ary operation, and $V$ an $H$-module. Define the $n$-ary operation $ [-,\ldots,-]_{R}$ on $R:=V\times H$ by $$[(u_1, A_1), (u_2, A_2)\ldots (u_n, A_n)]_{R}:=(\{A_1,\ldots, A_n\}u_n, A_1A_2\ldots A_{n-1}A_nA^{-1}_{n-1}A_{n-2}^{-1}\ldots A_1^{-1}).$$ Then $ (R, [-\ldots-]_R, (0,1)) $ is a Lie $n$-rack.

\end{example}

\begin{theorem}

Let $R$ be a Lie $n$-rack and $g:=T_1R.$  For all  $ x_1, x_2, \ldots, x_{n-1}\in R,$ the tangent mapping $\Phi (x_1,x_2,\ldots,x_{n-1})=T_1(\phi(x_1,x_2,\ldots,x_{n-1}))$ is an automorphism of $(\g, [-,\ldots,-]_{\g}).$
\end{theorem}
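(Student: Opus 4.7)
The plan is to push the $n$-rack automorphism property of $\phi(x_1,\ldots,x_{n-1})$ (Proposition 2.9) through the tangent functor $T_1$. Axiom 3 of Definition 2.1 gives $\phi(x_1,\ldots,x_{n-1})(1)=[x_1,\ldots,x_{n-1},1]_R=1$, so $\phi(x_1,\ldots,x_{n-1})$ fixes the basepoint and $\Phi(x_1,\ldots,x_{n-1})$ is a well-defined linear endomorphism of $\g$. Axiom 2 applied in families, via the implicit function theorem, upgrades $\phi(x_1,\ldots,x_{n-1})$ to a diffeomorphism of $R$ fixing $1$, and hence $\Phi(x_1,\ldots,x_{n-1})$ is a linear isomorphism.

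Take on $\g$ the natural Leibniz $n$-bracket obtained by differentiating the $n$-rack operation at the identity, in the spirit of Kinyon's construction for $n=2$: for $Y_1,\ldots,Y_n\in\g$ realized by smooth curves $\alpha_i$ with $\alpha_i(0)=1$ and $\alpha_i'(0)=Y_i$, put
$$[Y_1,\ldots,Y_n]_\g := \frac{\partial^n}{\partial t_1\cdots\partial t_n}\bigg|_{t=0}[\alpha_1(t_1),\ldots,\alpha_n(t_n)]_R\ \in\ T_1R=\g.$$
This is well-defined because axiom 3 forces $F(t):=[\alpha_1(t_1),\ldots,\alpha_n(t_n)]_R$ to satisfy $F(t_1,\ldots,t_{n-1},0)\equiv 1$; in local coordinates around $1$, every mixed partial of $F$ that omits the variable $t_n$ vanishes at the origin. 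Setting $\psi:=\phi(x_1,\ldots,x_{n-1})$ and using Proposition 2.9,
$$\psi\bigl([\alpha_1(t_1),\ldots,\alpha_n(t_n)]_R\bigr)=[\psi(\alpha_1(t_1)),\ldots,\psi(\alpha_n(t_n))]_R.$$
Differentiate both sides by $\partial^n/\partial t_1\cdots\partial t_n$ at $t=0$. The right-hand side yields $[\Phi(x_1,\ldots,x_{n-1})(Y_1),\ldots,\Phi(x_1,\ldots,x_{n-1})(Y_n)]_\g$, since each $\psi\circ\alpha_i$ is a curve through $1$ with initial velocity $\Phi(x_1,\ldots,x_{n-1})(Y_i)$. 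For the left-hand side, apply the multivariate Fa\`a di Bruno formula to $\psi\circ F$ at $F(0)=1$: the $n$-th mixed partial expands as a sum over set partitions of $\{1,\ldots,n\}$, with the single-block partition contributing $T_1\psi$ applied to the $n$-th mixed partial of $F$, that is $\Phi(x_1,\ldots,x_{n-1})([Y_1,\ldots,Y_n]_\g)$. Every multi-block partition contains at least one block $I$ with $n\notin I$, and the mixed partial of $F$ indexed by $I$ vanishes at the origin by the degeneracy above; those summands therefore drop out. Equating the two computations gives the automorphism identity, and together with the bijectivity of $\Phi(x_1,\ldots,x_{n-1})$ this completes the proof.

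The main technical obstacle is the Fa\`a di Bruno collapse: for a generic smooth $\psi$ the higher chain rule produces cross-terms built from lower-order mixed partials of $F$ paired with higher derivatives of $\psi$ at $1$, which would spoil linearity of the tangent map on brackets. It is exactly the degeneracy $F\equiv 1$ on the hyperplane $\{t_n=0\}$, forced by the right-pointedness axiom, that annihilates every multi-block contribution and leaves only the linear term $T_1\psi\cdot(\cdot)$. Everything else in the argument is bookkeeping using functoriality of $T_1$ together with Proposition 2.9.
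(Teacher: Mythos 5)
Your proof is correct and follows essentially the same route as the paper: both differentiate the identity $\phi(x_1,\ldots,x_{n-1})([y_1,\ldots,y_n]_R)=[\phi(x_1,\ldots,x_{n-1})(y_1),\ldots,\phi(x_1,\ldots,x_{n-1})(y_n)]_R$ (Proposition 2.9, equivalently the self-distributivity (2.1)) $n$ times at the basepoint, after noting that $\phi(x_1,\ldots,x_{n-1})$ fixes $1$ so that $\Phi(x_1,\ldots,x_{n-1})\in GL(\g)$. The only difference is that you make explicit the definition of $[-,\ldots,-]_{\g}$ as the $n$-th mixed partial of the rack operation and justify, via Fa\`a di Bruno and the degeneracy $[x_1,\ldots,x_{n-1},1]_R=1$, why the cross-terms in the higher chain rule vanish --- a step the paper simply asserts with ``successively differentiated \ldots yields.''
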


\begin{proof}
Since   $\phi(x_1,x_2,\ldots,x_{n-1})(1)=[x_1, x_2,\ldots,x_{n-1},1]_R=1,$ we apply the tangent functor $T_1$ to $\phi(x_1,x_2,\ldots,x_{n-1}): R\longrightarrow R$ and obtain 
$\Phi(x_1,x_2,\ldots,x_{n-1}): T_1R\longrightarrow T_1R$ which is in $GL(T_1 R)$ as $\phi(x_1,x_2,\ldots,x_{n-1})\in~ _nAut(R)$  by proposition 2.9. Now by the left distributive property of  $n$-racks, we have
$$\phi(x_1,x_2,\ldots,x_{n-1})(\phi(y_1,y_2,\ldots,y_{n-1})(y_n))=~~~~~~~~~~~~~~~~~~~~~~~~~~~~~~~~~~~$$$$=\phi(\phi(x_1,\ldots,x_{n-1})(y_1),\phi(x_1,\ldots,x_{n-1})(y_2),\ldots,\phi(x_1,\ldots,x_{n-1})(y_{n-1}))(\phi(x_1,\ldots,x_{n-1})(y_n))$$ which successively differentiated at $1\in R$ with respect  to $y_n,$ then $y_{n-1},$  until $y_1$ yields to 
$$\Phi(x_1,x_2,\ldots,x_{n-1})([Y_1,Y_2,\ldots, Y_n]_{\g})=~~~~~~~~~~~~~~~~~~~~~~~~~~~~~~~~~~~~~~~$$$$=[\Phi(x_1,x_2,\ldots,x_{n-1})(Y_1),\Phi(x_1,x_2,\ldots,x_{n-1})(Y_2),\ldots, \Phi(x_1,x_2,\ldots,x_{n-1})(Y_n)]_{\g}~~~(4.1)$$  for all  $Y_1, Y_2, \ldots, Y_n \in \g.$
\end{proof}

\begin{theorem}
 
Let $R$ be a Lie $n$-rack and  let $x_1, \ldots, x_{n-1}\in R$ corresponding respectively to $X_1,\ldots,X_{n-1}\in \g:=T_1R.$ Then, the  adjoint derivation  $ad\{X_1, \ldots, X_{n-1}\}: \g\longrightarrow gl(\g)$ defined by $$ad\{X_1, X_2, \ldots, X_{n-1}\}(Y)=[X_1,X_2,\ldots, X_{n-1}, Y]_{\g}$$ is exactly $T_1(\Phi)$

\end{theorem}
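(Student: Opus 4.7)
The plan is to unpack both sides of the asserted identity directly from the constructions in the proof of Theorem~4.4. Recall that $\phi:R^{n-1}\to{}_nAut(R)$ sends $(x_1,\ldots,x_{n-1})$ to the bijection $y\mapsto[x_1,\ldots,x_{n-1},y]_R$, and that $\Phi(x_1,\ldots,x_{n-1})=T_1(\phi(x_1,\ldots,x_{n-1}))\in GL(\g)$. Here $T_1(\Phi)$ should be read as the tangent, at the base point $(1,\ldots,1)\in R^{n-1}$, of the smooth map $\Phi:R^{n-1}\to GL(\g)$ viewed as landing in the vector space $gl(\g)$; it is thus a linear map $\g^{n-1}\to gl(\g)$, and the content of the theorem is that $T_1(\Phi)(X_1,\ldots,X_{n-1})=ad\{X_1,\ldots,X_{n-1}\}$. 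The bracket $[-,\ldots,-]_\g$ itself was constructed in the proof of Theorem~4.4 by differentiating $[y_1,\ldots,y_n]_R$ successively at $1$ with respect to $y_n$, then $y_{n-1}$, \ldots, and finally $y_1$.

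To carry out the proof I would fix smooth curves $\gamma_i:I\to R$ with $\gamma_i(0)=1$ and $\gamma_i'(0)=X_i$ for $i=1,\ldots,n-1$, together with a curve $\gamma:I\to R$ such that $\gamma(0)=1$ and $\gamma'(0)=Y$, and then compute directly. By the chain rule applied to $\Phi\circ(\gamma_1,\ldots,\gamma_{n-1})$ followed by evaluation at $Y$,
\[
T_1(\Phi)(X_1,\ldots,X_{n-1})(Y)=\left.\frac{\p^{n-1}}{\p t_1\cdots\p t_{n-1}}\right|_0\Phi(\gamma_1(t_1),\ldots,\gamma_{n-1}(t_{n-1}))(Y).
\]
Substituting $\Phi(x_1,\ldots,x_{n-1})(Y)=\left.\frac{d}{ds}\right|_0[x_1,\ldots,x_{n-1},\gamma(s)]_R$ converts this into
\[
T_1(\Phi)(X_1,\ldots,X_{n-1})(Y)=\left.\frac{\p^n}{\p t_1\cdots\p t_{n-1}\p s}\right|_0[\gamma_1(t_1),\ldots,\gamma_{n-1}(t_{n-1}),\gamma(s)]_R,
\]
which, by the defining recipe for $[-,\ldots,-]_\g$, is precisely $[X_1,\ldots,X_{n-1},Y]_\g=ad\{X_1,\ldots,X_{n-1}\}(Y)$. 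Since $Y\in\g$ is arbitrary, the two endomorphisms coincide in $gl(\g)$.

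The only technical issues worth flagging are that $\Phi$ must be genuinely smooth as a map $R^{n-1}\to gl(\g)$ in order for $T_1\Phi$ to make sense, and that the mixed partial derivatives above must commute so that the ordering of differentiations matches the one prescribed by the definition of the bracket. Both points follow from smoothness of the $n$-ary rack operation combined with functoriality of the tangent bundle and Schwarz's theorem. In effect the theorem is essentially tautological: it records that $ad\{X_1,\ldots,X_{n-1}\}$ is, by construction, the derivative at $(1,\ldots,1)$ of the conjugation-type representation $\Phi$, generalizing Kinyon's analogous observation for Lie racks (the case $n=2$).
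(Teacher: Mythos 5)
Your identification of $T_1(\Phi)(X_1,\ldots,X_{n-1})(Y)$ with the iterated mixed partial derivative of $[\gamma_1(t_1),\ldots,\gamma_{n-1}(t_{n-1}),\gamma(s)]_R$ at the origin, and hence with $[X_1,\ldots,X_{n-1},Y]_{\g}$ by the very construction of the bracket in the proof of Theorem 4.4, is correct, and it is actually more explicit than what the paper writes: the paper never spells out this computation, and your version (including the remarks on smoothness of $\Phi$ and on Schwarz's theorem for the ordering of the differentiations) is the honest justification of the equality $T_1(\Phi)=ad$. However, your route diverges from the paper's in what it chooses to prove. The paper's proof spends essentially all of its effort differentiating the automorphism identity (4.1) at $(1,\ldots,1)$ with respect to $(x_1,\ldots,x_{n-1})$, thereby obtaining the Leibniz $n$-algebra identity (1.1) for $[-,\ldots,-]_{\g}$, and only then concludes that $T_1(\Phi)$ is the adjoint derivation. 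That step is what justifies the word \emph{derivation} in the statement---it shows that $ad\{X_1,\ldots,X_{n-1}\}$ is a derivation of the bracket---and it is precisely the ingredient that Corollary 4.6 invokes. Your proof establishes the identification $T_1(\Phi)=ad$ (which the paper largely takes for granted) but omits the derivation property entirely. The two arguments are therefore complementary: yours supplies the tautological-but-unstated half, the paper's supplies the substantive half. To match the theorem as the paper intends it, and to support the corollary that follows, you would still need to differentiate (4.1) in the $x$-variables and observe that, with your identification in hand, the left side becomes $[X_1,\ldots,X_{n-1},[Y_1,\ldots,Y_n]_{\g}]_{\g}$ while the right side becomes the sum appearing in identity (1.1).
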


\begin{proof}

From the proof of theorem 4.4, $\Phi(x_1,x_2,\ldots,x_{n-1})\in GL(\g).$ Also, the mapping $\Phi: R\times R\times\ldots\times R\longrightarrow GL(\g)$ satisfies $\Phi(1,1,\ldots,1)=I,$ where $I\in GL(\g)$ is the identity. Differentiating $\Phi$ at (1,1,\ldots,1) yields a mapping  $T_1(\Phi): T_1( R\times R\times\ldots\times R)\longrightarrow gl(\g),$ where $gl(\g)$ is the Lie algebra  associated to the Lie group $GL(\g).$ Also Differentiating the identity (4.1) at  $(1,1,\ldots,1)$ with respect to  $(x_1,x_2,\ldots,x_{n-1})$  yields  $$[X_1,\ldots, X_{n-1}, [Y_1,Y_2,\ldots, Y_n]_{\g}]_{\g}=\sum_{i=1}^n[Y_1,\ldots,Y_{i-1},[X_1,\ldots,X_{n-1},Y_i]_{\g}, Y_{i+1},\ldots, Y_n]_{\g}.$$ Hence the mapping 
$T_1(\Phi)$ is the adjoint derivation.

\end{proof}

\begin{corollary}

Let $R$ be a Lie $n$-rack and $\g:=T_1R.$
Then there exist an $n$-linear mapping $[-,\ldots,-]_{\g}: \g\times \g \times\ldots\times\g\longrightarrow\g $ such that $(\g, [-,\ldots,-]_{\g})$ is a Leibniz $n$-algebra.

\end{corollary}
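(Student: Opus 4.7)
The strategy is to observe that the Leibniz $n$-algebra identity has already been derived at the end of the proof of Theorem 4.5; what remains for Corollary 4.6 is simply to make precise the $n$-linear bracket on $\g = T_1 R$ that the previous theorems treat implicitly, and to verify its multilinearity.

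First I would define the bracket by iterated differentiation of the $n$-ary rack operation at the neutral element. For $X_1,\ldots,X_n \in \g$, choose smooth curves $\gamma_i:(-\epsilon,\epsilon)\longrightarrow R$ with $\gamma_i(0)=1$ and $\gamma_i'(0)=X_i$, and set
$$[X_1,X_2,\ldots,X_n]_{\g}:=\frac{\partial^n}{\partial t_1\,\partial t_2\cdots\partial t_n}\bigg|_{t_1=\cdots=t_n=0}\,[\gamma_1(t_1),\gamma_2(t_2),\ldots,\gamma_n(t_n)]_R.$$
The pointed axioms yield $[1,1,\ldots,1]_R=1$, so the above mixed partial lies in $T_1 R=\g$. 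A standard argument (the mixed partial at a common zero of the coordinate restrictions depends only on the first-order data of each curve) shows that this expression is independent of the chosen curves and is $n$-linear in $(X_1,\ldots,X_n)$.

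Next I would reconcile this bracket with the constructions of Theorems 4.4 and 4.5. Performing the differentiation first in $t_n$ at $t_n=0$ and using $[x_1,\ldots,x_{n-1},1]_R=1$ gives $\Phi(\gamma_1(t_1),\ldots,\gamma_{n-1}(t_{n-1}))(X_n)$, where $\Phi$ is the tangent map from Theorem 4.4. Differentiating successively in $t_{n-1},\ldots,t_1$ at the neutral element recovers precisely $T_1(\Phi)(X_1,\ldots,X_{n-1})(X_n)=\mathrm{ad}\{X_1,\ldots,X_{n-1}\}(X_n)$ from Theorem 4.5. In particular, $[X_1,\ldots,X_{n-1},-]_\g$ is the adjoint derivation, which is already linear in $X_n$, so my definition agrees with the one implicit in the earlier theorems.

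Finally, the Leibniz $n$-algebra identity
$$[X_1,\ldots, X_{n-1}, [Y_1,Y_2,\ldots, Y_n]_{\g}]_{\g}=\sum_{i=1}^n[Y_1,\ldots,Y_{i-1},[X_1,\ldots,X_{n-1},Y_i]_{\g}, Y_{i+1},\ldots, Y_n]_{\g}$$
was obtained in the last line of the proof of Theorem 4.5 by differentiating the distributivity identity (4.1) in $(x_1,\ldots,x_{n-1})$ at $(1,\ldots,1)$. Combined with the $n$-linearity established above, this is exactly axiom (1.1), so $(\g,[-,\ldots,-]_\g)$ is a Leibniz $n$-algebra. The only real technical obstacle is verifying independence from the choice of curves and checking multilinearity of the iterated mixed derivative, both of which follow from a routine application of the chain rule together with the two pointed axioms $[1,\ldots,1,y]_R=y$ and $[x_1,\ldots,x_{n-1},1]_R=1$.
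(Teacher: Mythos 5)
Your proposal is correct and follows the same overall route as the paper: both rest on the identity obtained at the end of the proof of Theorem 4.5 by differentiating (4.1) at $(1,\ldots,1)$, which is precisely the statement that $[-,\ldots,-]_{\g}$ is a derivation of itself, i.e.\ axiom (1.1). The difference is one of completeness rather than strategy. The paper's own proof of the corollary is a single sentence deferring entirely to Theorems 4.4 and 4.5; it never writes the bracket down explicitly and never addresses $n$-linearity. You supply exactly that missing content: the definition of $[X_1,\ldots,X_n]_{\g}$ as the iterated mixed partial of the rack operation along curves through $1$, together with the observation that the axiom $[x_1,\ldots,x_{n-1},1]_R=1$ forces every lower-order mixed partial taken over a subset of the variables omitting the last slot to vanish at the basepoint, which is what makes the $n$-th mixed partial independent of charts and curves and multilinear in $(X_1,\ldots,X_n)$. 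This is a genuine improvement for $n>2$: the literal tangent map $T_1\Phi$ of $\Phi\colon R^{\times(n-1)}\longrightarrow GL(\g)$ at $(1,\ldots,1)$, as it appears in Theorem 4.5, is a linear map on the direct sum $\g^{\oplus(n-1)}$ and hence only additive across the slots, so the paper's ``differentiation with respect to $(x_1,\ldots,x_{n-1})$'' must be read as your iterated mixed partial in order to produce a multilinear bracket. The one sentence I would tighten is your claim that the successive differentiations ``recover precisely $T_1(\Phi)(X_1,\ldots,X_{n-1})(X_n)$'': for $n>2$ the $(n-1)$-fold mixed derivative of $\Phi$ is not the first-order tangent map, so this is a (sensible) reinterpretation of the paper's notation rather than an equality of the two objects as literally defined.
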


\begin{proof}
From  the proofs of theorems 4.4 and 4.5, it is clear that the $n$-ary operation  $[-,\ldots,-]_{\g}$ is a derivation for itself. 
\end{proof}

\end{document}